\newtheorem*{thm}{Theorem}
\newtheorem*{thm1}{Theorem 1}
\newtheorem*{thm2}{Theorem 2}
\newtheorem*{defin}{Definition}
\newtheorem*{prop}{Proposition}
\newtheorem{cor}{Corollary}
\newtheorem{prob}{Problem}
\newtheorem{example}{Example}
\numberwithin{equation}{section}
\begin{document}
\title{Exponential Hilbert series and hierarchical log-linear models}
\author{Wayne A. Johnson}
\address{Department of Statistics\\
 Truman State University\\
 wjohnson@truman.edu}

\begin{abstract}
Consider a hierarchical log-linear model for a discrete random vector, $X$, given by a simplicial complex, $\Gamma$, and an integer matrix, $A_\Gamma$. We give a new characterization of the rank of $A_\Gamma$ given by a logarithmic transformation on the exponential Hilbert series of $\Gamma$. We show that, if each random variable in $X$ has the same number of possible outcomes, then this formula reduces to a simple description in terms of the face vector of $\Gamma$. If $\Gamma$ further satisfies the Dehn-Sommerville relations, then we give an exceptionally simple formula for computing the rank of $A_\Gamma$, and thus the dimension and the number of degrees of freedom of the model.
\vspace{1 pc}
\\
\noindent \textbf{MSC2020}: 13P25, 62R01
\\
\noindent \textbf{Keywords}: hierarchical log-linear models, simplicial complexes, Hilbert series, degrees of freedom 
\end{abstract}

\maketitle

\section{Introduction}

In this note, we show how the exponential Hilbert series, as defined in \cite{JM} can be used to compute the dimension (and thus the number of degrees of freedom) of a hierarchical log-linear model. We focus on how, under a simple logarithmic transformation, the rank formula first computed in \cite{HS} can be obtained through the formula for the exponential Hilbert series of the underlying simplicial complex computed in \cite{JM}. Under certain assumptions, it can further be shown that the formula from \cite{HS} can be computed using the face vector of the simplicial complex, which greatly simplifies the computation.

Throughout the rest of this section, we provide background on log-linear models from the point of view of algebraic statistics, as well as a quick primer on simplicial complexes. The former borrows heavily from \cite{Sull}. More detail on simplicial complexes and Hilbert series can be found in \cite{MS}. In \S2, we present more focused background material on hierarchical log-linear models. In \S3, we present material on exponential Hilbert series, following \cite{JM}. The remaining sections are a discussion of the main results with a quick application to cyclic models, a presentation of some interesting examples, and some open questions related to Hilbert series and hierarchical models.

\subsection{Log-linear models}

Let $r\in\mathbb{Z}^+$. We denote by $\Delta_{r-1}$ the probability simplex of dimension $r-1$. A \textit{log-linear model} is a certain family of probability distributions given by a $k\times r$-integer matrix, $A$.

\begin{defin}
Let $A\in\mathbb{Z}^{k\times r}$ be an integer matrix such that $(1,1,\dots,1):=\textbf{1}$ is in the row span of $A$. The log-linear model associated to $A$ is the set of probability distributions
\begin{center}
$\mathcal{M}_A:=\{p\in\text{int}(\Delta_{r-1}) : \log(p)\in\text{rowspan}(A)\}$.
\end{center}
\end{defin}

Note that $p\in\text{int}(\Delta_{r-1})$ implies that no probability in the distribution $p$ is zero. Log-linear models describe exponential families of discrete random variables with a finite number of states. Let $X=(X_1,\dots,X_k)$ be a discrete random vector with state space $[r]:=\{1,2,\dots,r\}$. Let $T:[r]\rightarrow\mathbb{R}^k$ be a statistic, i.e. a measurable map from $[r]$ to $\mathbb{R}^k$. Then $T$ associates a vector in $\mathbb{R}^k$ to each $x\in[r]$. Denote this vector by $T(x)=a_x$. Assume that each component of $a_x$ is an integer, and let $A$ be the $k\times r$-matrix whose rows are the vectors $a_x$.

In general, an exponential family is determined by its parameter space. For a discrete exponential family, let $\eta:=(\eta_1,\dots,\eta_k)^t$ be a vector of parameters. Then define the probability density function $p_\eta$ on $[r]$ by
\begin{center}
$p_\eta(x):=e^{\eta^tT(x)-\phi(\eta)}$,
\end{center}
where $\phi(\eta)$ is
\begin{center}
$\log\displaystyle\left(1+\sum_{x=1}^{r-1}e^{\eta_x}\right)$. 
\end{center}
Then, taking the logarithm, we get
\begin{equation}
\log(p_\eta(x))=\eta^tT(x)-\phi(\eta).
\end{equation}

Let $\theta_i=e^{\eta_i}$. Then we can rewrite $p_\eta(x)$ as
\begin{center}
$p_\theta(x)=\displaystyle\frac{1}{Z(\theta)}\prod_{j}\theta_j^{a_jx}$,
\end{center}
where $Z(\theta)$ is a normalizing constant, and the integers $a_{jx}$ are the terms in $A$. Then (1.1) becomes
\begin{center}
$\log(p_\theta(x))=\log(\theta)^tA-\log(Z(\theta))$.
\end{center}
If we add the final assumption that $\textbf{1}\in\text{rowspace}(A)$, then this is equivalent to saying that $\log(p)$ is in the row space of $A$. Thus, a log-linear model is the probability model on a discrete exponential family.

As a quick example, let $A=\begin{pmatrix}2 & 1 & 0\\ 0 & 1 & 2\end{pmatrix}$. Then there are two variables, $X_1$ and $X_2$. The model corresponds to the interaction factors
\begin{center}
$\theta_1^2$, $\theta_1\theta_2$, and $\theta_2^2$
\end{center}
corresponding to the model where all degree two interaction factors are non-trivial. These factors can be read off the matrix by thinking of the rows as indexed by $X_1$ and $X_2$ and the columns giving the exponents in the parameters.

\subsection{Simplicial complexes}

This paper concerns a special class of log-linear models whose parameters interact in a specific way determined by a simplicial complex. Here we recall the facts about simplicial complexes we will need in the sequel.

Let $\Gamma$ be an abstract simplicial complex on $[m]$. This means that $\Gamma$ is a collection of subsets of $[m]$ such that $\Gamma$ contains all singletons and is closed under taking subsets. We will alternatively denote $\Gamma$ by listing out all sets in $\Gamma$ or by listing just the maximal subsets contained in $\Gamma$. For example,
\begin{center}
$\Gamma=\{\emptyset, \{1\}, \{2\}, \{1,2\}\}$ \\
$\Gamma=[12]$
\end{center}
denote the same simplicial complex on $[2]$. The subsets contained in $\Gamma$ are called the \textit{faces} of $\Gamma$. As above, we include the empty face. Given a face, $F\in\Gamma$, the dimension of $F$ is $\dim(F):=\# F-1$. The dimension of $\Gamma$ is the dimension of its largest face. The empty set is the unique face of dimension $-1$. If $F$ is a face of dimension $i$, we call $F$ an $i$-face of $\Gamma$.

Abstract simplicial complexes are studied via the algebra of their Stanley-Reisner rings, which is reviewed in \S3. The main invariant of a simplicial complex we will need is the face vector, $f(\Gamma)=(f_{-1},f_0,\dots,f_{\dim(\Gamma)})$, where $f_i$ denotes the number of $i$-faces in $\Gamma$. If $\Gamma$ can be assumed from context, we simply write the face vector as $f$. The face vector has been studied extensively. For a modern treatment, relevant to the results in the sequel, see \cite{NS}.

\section{Hierarchical log-linear models models}

Let $X=(X_1,\dots,X_m)$ be a discrete random vector, and assume that each $X_i$ has state space $[r_i]$. Let
\begin{center}
$\mathcal{R}:=\displaystyle\prod_{i=1}^m[r_i]$
\end{center}
denote the joint state space of the random vector $X$.

We use the convention in \cite{Sull} for writing subindices: let $i=(i_1,\dots,i_m)\in\mathcal{R}$ and $F=\{f_1,f_2,\dots\}\subseteq[m]$. Then we define
\begin{center}
$i_F:=(i_{f_1},i_{f_2},\dots)$.
\end{center}
For each $F\subseteq[m]$, the random subvector $X_F:=(X_f)_{f\in F}$ has state space $\mathcal{R}_F=\displaystyle\prod_{f\in F}[r_f]$. With this notation set, we define a \textit{hierarchical log-linear model} as follows (this is Definition 9.3.2 in \cite{Sull}).

\begin{defin}
Let $\Gamma\subseteq 2^{[m]}$ be a simplicial complex and let $r_1,\dots,r_m\in\mathbb{N}$. For each facet $F\in\Gamma$, let $\theta^{(F)}_{i_F}$ be a set of $\#\mathcal{R}_F$ \textit{positive} parameters. Then the \textbf{hierarchical log-linear model} associated with $\Gamma$ is the set of all probability distributions
\begin{center}
$\mathcal{M}_\Gamma:=\left\{p\in\Delta_{\mathcal{R}-1} : p_i=\displaystyle\frac{1}{Z(\theta)}\prod_{F\in\text{facet}(\Gamma)}\theta^{(F)}_{i_F}, \forall i\in\mathcal{R}\right\}$,
\end{center}
where $Z(\theta)$ is the normalizing constant
\begin{center}
$Z(\theta)=\displaystyle\sum_{i\in\mathcal{R}}\prod_{F\in\text{facet}(\Gamma)}\theta^{(F)}_{i_F}$.
\end{center}
\end{defin}

We will denote by $A_\Gamma$ the integer matrix that describes the hierarchical model,  as in the example at the end of \S1.1. Note that $A_\Gamma$ is a $\{0,1\}$-matrix, as monomials corresponding to facets must be squarefree. This paper mainly concerns the rank of $A_\Gamma$. A formula for this rank was first computed in \cite{HS}. We recall the result below.

\begin{prop}[Cor. 2.7 in \cite{HS} or Prop. 9.3.10 in \cite{Sull}]
Let $\Gamma$ be a simplicial complex on $[m]$, and $r_1,\dots,r_m\in\mathbb{N}$. The rank of the matrix $A_\Gamma$ associated to these parameters is
\begin{center}
$\displaystyle\sum_{F\in\Gamma}\prod_{f\in F}(r_f-1)$,
\end{center}
where the sum runs over all faces of $\Gamma$. The dimension of the associated hierarchical model $\mathcal{M}_\Gamma$ is one less than the rank of $A_\Gamma$.
\end{prop}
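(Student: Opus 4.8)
The plan is to compute the rank of $A_\Gamma$ by realizing its row space as an explicit piece of the space $\mathbb{R}^{\mathcal{R}}$ of real functions on the joint state space, via the classical interaction (ANOVA-type) decomposition. First I would describe the rows concretely: they are indexed by pairs $(F,j)$ with $F$ a facet of $\Gamma$ and $j\in\mathcal{R}_F$, and the row $(F,j)$ is the indicator of the cylinder set $\{i\in\mathcal{R}:i_F=j\}$, since the parameter $\theta^{(F)}_j$ occurs in $p_i$ exactly when $i_F=j$. Hence $\mathrm{rank}(A_\Gamma)=\dim V$, where $V:=\mathrm{span}\{\mathbf{1}[i_F=j]:F\in\mathrm{facet}(\Gamma),\,j\in\mathcal{R}_F\}$. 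The first observation is that $V$ is unchanged if $F$ ranges over all faces of $\Gamma$: for a subface $G\subseteq F$ of a facet $F$ one has $\mathbf{1}[i_G=j_G]=\sum_{k\in\mathcal{R}_{F\setminus G}}\mathbf{1}[i_F=(j_G,k)]$, so every face-indicator already lies in the facet-span, and facets are faces. Thus $V=\mathrm{span}\{\mathbf{1}[i_F=j]:F\in\Gamma,\,j\in\mathcal{R}_F\}$.

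Next I would introduce the decomposition. For each $f\in[m]$ let $W_f\subseteq\mathbb{R}^{[r_f]}$ be the subspace of functions summing to zero, so $\dim W_f=r_f-1$, and let $C_f=\langle\mathbf{1}\rangle$ be the constants, giving $\mathbb{R}^{[r_f]}=W_f\oplus C_f$. For each $F\subseteq[m]$ set
$$U_F:=\Big(\bigotimes_{f\in F}W_f\Big)\otimes\Big(\bigotimes_{f\notin F}C_f\Big)\subseteq\mathbb{R}^{\mathcal{R}},$$
so $U_F$ consists of functions depending only on the coordinates in $F$ and averaging to zero in each of those, with $\dim U_F=\prod_{f\in F}(r_f-1)$. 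Distributing the tensor product over the splittings $W_f\oplus C_f$ yields the direct sum $\mathbb{R}^{\mathcal{R}}=\bigoplus_{F\subseteq[m]}U_F$. The structural fact I would then establish is that the space of functions depending only on the coordinates in a fixed $G$ is exactly $\bigoplus_{F\subseteq G}U_F$; this is precisely the span of the indicators $\{\mathbf{1}[i_G=j]:j\in\mathcal{R}_G\}$.

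Granting this, $V=\sum_{G\in\Gamma}\bigoplus_{F\subseteq G}U_F$. Since $\Gamma$ is closed under taking subsets, the index set $\{F:F\subseteq G\text{ for some }G\in\Gamma\}$ is exactly $\Gamma$, and because all the $U_F$ are summands of one global direct sum decomposition this collapses to the genuine direct sum $V=\bigoplus_{F\in\Gamma}U_F$. Taking dimensions gives $\mathrm{rank}(A_\Gamma)=\sum_{F\in\Gamma}\prod_{f\in F}(r_f-1)$, as claimed. For the final sentence, note $\mathbf{1}\in\mathrm{rowspan}(A_\Gamma)$ (indeed $U_\emptyset=\langle\mathbf{1}\rangle$); by the general theory of log-linear models, adding a constant to $\log p$ only rescales $p$ and is absorbed by the normalizer $Z(\theta)$, so the model's dimension is that of $\mathrm{rowspan}(A_\Gamma)$ modulo the constants, i.e. one less than the rank.

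The main obstacle I anticipate is the structural lemma identifying the functions supported on the coordinates in $G$ with $\bigoplus_{F\subseteq G}U_F$, together with the bookkeeping that makes $\sum_{G\in\Gamma}\bigoplus_{F\subseteq G}U_F$ a direct sum with no double counting. Both rest on the directness of the interaction decomposition, so I would prove the global decomposition $\mathbb{R}^{\mathcal{R}}=\bigoplus_{F\subseteq[m]}U_F$ and compute the $\dim U_F$ carefully before restricting to faces of $\Gamma$ and summing.
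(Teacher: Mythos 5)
Your argument is correct, but it is worth noting that the paper itself does not prove this proposition: it is quoted from Ho\c{s}ten--Sullivant (Cor.\ 2.7) and Sullivant's book (Prop.\ 9.3.10), and the author only remarks that the original proof proceeds ``by studying the kernel of $A_\Gamma$,'' i.e.\ by analyzing $A_\Gamma$ as a linear map. You instead work directly with the row space: you identify the rows as cylinder-set indicators $\mathbf{1}[i_F=j]$, reduce from facets to all faces via the refinement identity $\mathbf{1}[i_G=j_G]=\sum_k\mathbf{1}[i_F=(j_G,k)]$, and then invoke the ANOVA/interaction decomposition $\mathbb{R}^{\mathcal{R}}=\bigoplus_{F\subseteq[m]}U_F$ with $\dim U_F=\prod_{f\in F}(r_f-1)$, using the fact that $\Gamma$ is closed under subsets to collapse $\sum_{G\in\Gamma}\bigoplus_{F\subseteq G}U_F$ to the direct sum $\bigoplus_{F\in\Gamma}U_F$. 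All the steps check out, including the final reduction of the model dimension by one for the constants $U_\emptyset=\langle\mathbf{1}\rangle$ absorbed by the normalizer. What your route buys is a self-contained, basis-free proof in which the rank formula is visibly a sum of dimensions of independent interaction subspaces indexed by the faces of $\Gamma$; this makes the combinatorial content of the formula transparent and, incidentally, meshes well with the paper's own Theorem 1, since the summand $\prod_{f\in F}(r_f-1)$ is exactly the term $\prod_{f\in F}(e^{x_f}-1)$ of the exponential Hilbert series evaluated at $x_f=\log r_f$. The kernel-based proof in the cited sources reaches the same count by complementary means; neither is more general, but yours is arguably the more conceptual of the two.
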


The original proof of this proposition was done by studying the kernel of $A_\Gamma$. In other words, the proof is based on the structure of the matrix as a linear map. In \S4, we show that the above formula can be recovered by focusing on the structure of $\Gamma$ with no appeal to the linear algebra of $A_\Gamma$. As the rank of $A_\Gamma$ determines the dimension of the model $\mathcal{M}_\Gamma$, it also determines the number of degrees of freedom of the model, as this is the codimension of the model in $\Delta_{\#\mathcal{R}-1}$.

Two particular models we are interested in are the \textit{main effect model}, where $\Gamma$ consists of the singleton sets and the empty set, and the \textit{saturated model}, where $\Gamma=2^{[m]}$. We think of these models as being on opposite extremes when it comes to conditional independence. The saturated model, in particular, is useful for defining the deviance of a model in hypothesis testing. See \cite{HEL} for more details.

\section{Exponential Hilbert series of simplicial complexes}

In this section, we define (both coarse and fine) exponential Hilbert series and present two theorems from \cite{JM} on how they are computed. The proofs of these two theorems are simple adaptations of the analogous theorems on classical Hilbert series, which can be found in \cite{MS}. First, let $\Gamma$ be a simplicial complex on $[m]$. We denote by $I_\Gamma$ the Stanley-Reisner ideal given by
\begin{center}
$I_\Gamma := (\textbf{x}^\textbf{a}\mid [a]\notin\Gamma)$.
\end{center}
In other words, the Stanley-Reisner ideal is the (squarefree) monomial ideal in $S=\mathbb{C}[x_1,\dots,x_m]$ generated by the non-faces of $\Gamma$. Then the \textit{Stanley-Reisner ring}, sometimes called the \textit{face ring}, of $\Gamma$ is the quotient of $S$ by $I_\Gamma$. We denote this ring as $SR_\Gamma$.

Since $I_\Gamma$ is generated by squarefree monomials, the Stanley-Reisner ring is graded over $\mathbb{N}^m$. For a given monomial $\textbf{x}^{\textbf{a}}$, let $M_{\textbf{a}}$ denote the $\textbf{a}^{\text{th}}$-graded component of $SR_\Gamma$. Then the \textit{(coarse) exponential Hilbert series} of $\Gamma$ is the formal power series
\begin{center}
$E(\Gamma;\textbf{x}):=\displaystyle\sum_{\textbf{a}\in\mathbb{N}^m}\dim(M_\textbf{a})\frac{\textbf{x}^\textbf{a}}{\textbf{a}!}$,
\end{center}
where $\textbf{a}!=a_1!a_2!\dots a_m!$. By setting each $x_i=t$, we obtain the fine exponential Hilbert series of $\Gamma$. We will see in the next section that the coarse series can be used to study hierarchical models on $\Gamma$ such that the individual random variables can have differing numbers of outcomes, while the fine series can be used to study the simpler case where each variable has the same number of outcomes. We present two theorems from \cite{JM} that provide formulas for each series for an arbitrary complex, $\Gamma$. Denote by $E(\Gamma;t)$ the fine exponential Hilbert series.

\begin{thm}
Let $\Gamma$ be a simplicial complex with Stanley-Reisner ideal $I_\Gamma$. Then
\begin{center}
$E(\Gamma;\textbf{x}) = \displaystyle\sum_{F\in\Gamma}\prod_{f\in F}(e^{x_f}-1)$.
\end{center}
\end{thm}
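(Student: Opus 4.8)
The plan is to reduce the computation to the standard monomial basis of the Stanley-Reisner ring, which forces every graded dimension into $\{0,1\}$, and then to reorganize the defining generating function according to the support of the exponent vector. First I would recall the $\mathbb{N}^m$-graded structure of $SR_\Gamma$. Since $I_\Gamma$ is the monomial ideal generated by the squarefree monomials of the non-faces, a monomial $\textbf{x}^{\textbf{a}}$ lies in $I_\Gamma$ precisely when it is divisible by one of these generators, i.e.\ when its support $\mathrm{supp}(\textbf{a})=\{i : a_i>0\}$ contains a non-face. Because $\Gamma$ is closed under taking subsets, this happens if and only if $\mathrm{supp}(\textbf{a})$ is itself not a face of $\Gamma$. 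Hence the images of the monomials whose support is a face of $\Gamma$ descend to a basis of $SR_\Gamma$, and each such monomial spans its own graded piece $M_{\textbf{a}}$.

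This yields the one genuinely algebraic input of the argument, namely
\begin{center}
$\dim(M_{\textbf{a}}) = \begin{cases} 1, & \mathrm{supp}(\textbf{a})\in\Gamma, \\ 0, & \text{otherwise.} \end{cases}$
\end{center}
I regard this identification as the crux of the proof; it is exactly the point where the reasoning mirrors the classical Hilbert series computation in \cite{MS}, and everything downstream is a formal rearrangement. Once it is in place, the sum defining $E(\Gamma;\textbf{x})$ collapses to a sum over those $\textbf{a}\in\mathbb{N}^m$ with $\mathrm{supp}(\textbf{a})\in\Gamma$, each contributing the single term $\textbf{x}^{\textbf{a}}/\textbf{a}!$.

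To finish, I would partition these surviving exponent vectors by their support: for a fixed face $F\in\Gamma$, the vectors with support exactly $F$ are those satisfying $a_f\geq 1$ for $f\in F$ and $a_f=0$ for $f\notin F$. The contribution of such vectors factors across the coordinates in $F$, and in each coordinate one recognizes the shifted exponential series $\sum_{k\geq 1} x_f^k/k! = e^{x_f}-1$. Summing over all faces $F\in\Gamma$, with the empty face contributing the empty product $1$, then produces $\sum_{F\in\Gamma}\prod_{f\in F}(e^{x_f}-1)$, as claimed. No convergence difficulties arise since the manipulation is purely formal in the power series ring; the only care needed is to treat the empty face correctly and to confirm that partitioning by support partitions the full index set of nonzero terms.
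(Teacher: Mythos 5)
Your proof is correct and follows essentially the same route as the one the paper relies on: the paper defers the proof to \cite{JM}, describing it as a direct adaptation of the classical Hilbert series computation in \cite{MS}, which is precisely your argument of identifying the standard monomial basis (so that $\dim(M_{\mathbf{a}})$ is $1$ or $0$ according to whether $\mathrm{supp}(\mathbf{a})\in\Gamma$) and then stratifying the exponent vectors by their support, with $\sum_{k\geq 1}x_f^k/k!=e^{x_f}-1$ replacing the geometric series of the classical case. No gaps; the handling of the empty face as the empty product is the right way to account for the constant term.
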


A similar theorem for the fine exponential Hilbert series follows quickly from the above.

\begin{thm}
Let $\Gamma$ be a simplicial complex with Stanley-Reisner ideal $I_\Gamma$, then
\begin{center}
$E(\Gamma;t)=\displaystyle\sum_{F\in\Gamma}(e^t-1)^{\dim(f)+1}$.
\end{center}
\end{thm}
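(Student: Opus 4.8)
The plan is to derive Theorem 2 directly from Theorem 1 by performing the specialization $x_f = t$ for all $f \in [m]$. Since the coarse exponential Hilbert series is a formal power series in the variables $x_1, \dots, x_m$, and the fine series is by definition obtained from it by setting each $x_i = t$, I would simply substitute into the closed-form expression provided by Theorem 1. This reduces the problem to a term-by-term evaluation of the right-hand side sum.

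First I would write out the specialization explicitly. Starting from
\begin{equation*}
E(\Gamma;\mathbf{x}) = \sum_{F\in\Gamma}\prod_{f\in F}(e^{x_f}-1),
\end{equation*}
I would set $x_f = t$ for every $f$. Inside each inner product, every factor $e^{x_f}-1$ becomes the same factor $e^t - 1$, independent of which element $f$ of $F$ it is indexed by. Hence the product over $f \in F$ collapses into a single power, namely $(e^t-1)^{\#F}$, where $\#F$ is the number of elements of the face $F$.

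The only remaining point is to rewrite the exponent $\#F$ in terms of dimension. Recall from the background in \S1.2 that the dimension of a face is $\dim(F) = \#F - 1$, so $\#F = \dim(F) + 1$. Substituting this gives $(e^t-1)^{\dim(F)+1}$, and summing over all faces $F \in \Gamma$ yields exactly the claimed formula. I would note that the empty face contributes $(e^t-1)^0 = 1$, consistent with the convention $\dim(\emptyset) = -1$, so the constant term behaves correctly.

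I do not anticipate a genuine obstacle here, since the result is a routine specialization of Theorem 1; the statement itself even describes the argument as following ``quickly.'' The only subtlety worth flagging is purely notational: the statement writes the exponent as $\dim(f)+1$, where $f$ should be read as the running face rather than the element index used inside the product in Theorem 1. I would make sure the proof uses consistent notation, treating $F$ as the face and replacing the exponent count by $\dim(F)+1$, so that no confusion arises between a face $F$ and its elements $f \in F$.
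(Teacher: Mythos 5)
Your proposal is correct and matches the paper's (implicit) argument exactly: the paper defines the fine series by setting each $x_i=t$ in the coarse series and states that the theorem ``follows quickly'' from Theorem 1, which is precisely the specialization you carry out, including the observation that $\#F=\dim(F)+1$. Your notational remark about $\dim(f)$ versus $\dim(F)$ in the statement is also apt.
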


In particular, $E(\Gamma;t)$ is polynomial in $e^t$. Let $d=\dim(\Gamma)+1$. Then, if we set
\begin{center}
$E(\Gamma;t)=E_0+E_1e^t+\dots+E_de^d$,
\end{center}
we have the following characterization of $E_0, \dots, E_d$, in terms of the face vector.
\begin{equation}
E_k=\displaystyle\sum_{i=k}^d(-1)^{i-k}f_{i-1}{i \choose k}.
\end{equation}
The vector $(E_0,\dots,E_d)$ is called the $e$-\textit{vector} of $\Gamma$, as it relates to the $h$-vector and $f$-vectors from classical discrete geometry. See \cite{JM} for details.

In the next section, we will use these theorems along with the result from \cite{Sull} in \S2 to show how the rank of $A_\Gamma$ can be computed using exponential Hilbert series.

\section{Main theorems}

\begin{thm1}
Let $X=(X_1,\dots,X_m)$ be a random vector with $X_i$ having $r_i$ possible outcomes. Let $\Gamma$ be an abstract simplicial complex and let $\mathcal{M}_{A_{\Gamma}}$ be the hierarchical log-linear for $X$ on $\Gamma$. Then the rank of the matrix $A_\Gamma$ of sufficient statistics is
\begin{center}
$E(\Gamma;\log(r_1),\dots,\log(r_m))$.
\end{center}
\end{thm1}
\begin{proof}
The proof of this result is really just a combination of two previous theorems. Recall that, from \cite{JM}, we have
\begin{equation}
E(\Gamma;\textbf{x})=\displaystyle\sum_{F\subseteq\Gamma}\prod_{f\in F}(e^{x_f}-1).
\end{equation}
We also have, from \cite{Sull},
\begin{equation}
\text{rank}(A_\Gamma)=\displaystyle\sum_{F\in\Gamma}\prod_{f\in F}(r_f-1).
\end{equation}
Then substituting $x_f\mapsto\log(r_i)$ into (4.1) yields (4.2).
\end{proof}

\begin{thm2}
Assume that $r_1=r_2=\dots=r_m=r$. Then the rank of $A_\Gamma$ is
\begin{center}
$E(\Gamma;\log(r))=E_0+E_1r+\dots+E_dr^d$,
\end{center}
where $d=\dim(\Gamma)$.
\end{thm2}

\begin{proof}
Let $r_1=r_2=\dots=r_m=r$. Then,
\begin{center}
$\text{rank}(A_\Gamma)=E(\Gamma;\log(r),\dots,\log(r))$.
\end{center}
Plugging $\log(r)$ in for each variable in Theorem 1 yields
\begin{center}
$E(\Gamma;\log(r),\dots,\log(r))=\displaystyle\sum_{F\subset\Gamma}\prod_{i\in F}(e^{\log(r)}-1)=\sum_{F\subset\Gamma}\prod_{i\in F}(r-1)$.
\end{center}
Expanding the right-hand side over the product yields
\begin{center}
$\displaystyle\sum_{F\subset\Gamma}(r-1)^{\dim(F)+1}=E(\Gamma;\log(r))$.
\end{center}
\end{proof}

\begin{cor}
Assume that $r_1=r_2=\dots=r_m=r$, and let $d=\dim(\Gamma)+1$. Then the rank of $A_\Gamma$ is
\begin{center}
$\text{rank}(A_\Gamma)=\displaystyle\sum_{k=0}^d\sum_{i=k}^d(-1)^{i-k}f_{i-1}{i \choose k}r^k$.
\end{center}
\end{cor}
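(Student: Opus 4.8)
The plan is to derive the formula directly by substituting the face-vector expression (3.1) for each coefficient $E_k$ into the polynomial form of the rank supplied by Theorem 2; no genuinely new content is required, so this is purely a matter of assembling two results already in hand. First I would invoke Theorem 2, which under the hypothesis $r_1=\dots=r_m=r$ gives
\[
\text{rank}(A_\Gamma)=E(\Gamma;\log r)=\sum_{k=0}^d E_k r^k,
\]
where $(E_0,\dots,E_d)$ is the $e$-vector of $\Gamma$ and $d=\dim(\Gamma)+1$.

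Next I would replace each $E_k$ by its value in terms of the face vector from (3.1), namely $E_k=\sum_{i=k}^d(-1)^{i-k}f_{i-1}{i \choose k}$, and then pull the factor $r^k$ inside the inner sum. This yields
\[
\text{rank}(A_\Gamma)=\sum_{k=0}^d\left(\sum_{i=k}^d(-1)^{i-k}f_{i-1}{i \choose k}\right)r^k=\sum_{k=0}^d\sum_{i=k}^d(-1)^{i-k}f_{i-1}{i \choose k}r^k,
\]
which is exactly the claimed identity.

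The only point demanding care — and the one mild obstacle — is the internal consistency of the index $d$: both the statement of this corollary and the discussion in \S3 take $d=\dim(\Gamma)+1$, whereas Theorem 2 as written records $d=\dim(\Gamma)$. I would reconcile these by observing that $E(\Gamma;t)=\sum_{F\in\Gamma}(e^t-1)^{\dim(F)+1}$ has top degree $\dim(\Gamma)+1$ in $e^t$, so the correct upper summation limit is $d=\dim(\Gamma)+1$, and I would use this value uniformly. With that bookkeeping settled, the corollary follows formally from Theorem 2 and (3.1) with no further argument.
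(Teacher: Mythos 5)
Your proof is correct and follows exactly the paper's own route: the corollary is obtained by substituting the face-vector expression (3.1) for each $E_k$ into the polynomial $\sum_k E_k r^k$ from Theorem 2. Your remark reconciling the typo in the index $d$ (Theorem 2 writes $d=\dim(\Gamma)$ while the corollary and \S3 use $d=\dim(\Gamma)+1$) is a correct and worthwhile piece of bookkeeping that the paper leaves implicit.
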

\begin{proof}
This follows immediately from Theorem 2 and (3.1).
\end{proof}

As a quick application of Corollary 1, we consider cyclic models. In a \textit{cyclic} model on $[m]$, $\Gamma=[12][23]\dots[(m-1)m][m1]$. Thus, the face vector of $\Gamma$ is $(1,m,m)$, as $\Gamma$ is a graph with $m$ vertices and $m$ edges. The $e$-vector of $\Gamma$ is easily computed via (3.1) to be $(1,-m,m)$. The following then follows immediately from Theorem 2/Corollary 1.

\begin{prop}
Let $\Gamma=[12][23]\dots[(m-1)m][m1]$, and assume $r_1=r_2=\dots=r_m=r$. Then
\begin{center}
$\text{rank}(A_\Gamma)=1-mr+mr^2$.
\end{center}
\end{prop}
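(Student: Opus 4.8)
The plan is to apply Corollary 1 (equivalently Theorem 2) directly, so that the entire argument reduces to reading off the face vector of the cycle and feeding it through the alternating-sum formula (3.1). First I would record the combinatorics of $\Gamma = [12][23]\cdots[(m-1)m][m1]$: it is a $1$-dimensional complex (a graph) consisting of the empty face, $m$ vertices, and $m$ edges, so that its face vector is $f = (f_{-1}, f_0, f_1) = (1, m, m)$ and $\dim(\Gamma) = 1$.

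Next I would compute the $e$-vector $(E_0, E_1, E_2)$ from (3.1) with top index $\dim(\Gamma)+1 = 2$. Substituting $f_{-1}=1$, $f_0 = m$, and $f_1 = m$ into $E_k = \sum_{i=k}^{2}(-1)^{i-k} f_{i-1}\binom{i}{k}$ gives $E_0 = f_{-1} - f_0 + f_1 = 1 - m + m = 1$, then $E_1 = f_0 - 2 f_1 = m - 2m = -m$, and finally $E_2 = f_1 = m$. Hence the $e$-vector is $(1, -m, m)$, exactly as asserted in the lead-in to the statement.

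Finally, Theorem 2 identifies $\text{rank}(A_\Gamma)$ with $E(\Gamma;\log r) = E_0 + E_1 r + E_2 r^2$, and inserting the values just found yields $1 - mr + mr^2$, completing the argument. As an independent sanity check I would instead evaluate the closed form $\text{rank}(A_\Gamma) = \sum_{F \in \Gamma}(r-1)^{\dim(F)+1}$ appearing in the proof of Theorem 2: the empty face contributes $1$, each of the $m$ vertices contributes $(r-1)$, and each of the $m$ edges contributes $(r-1)^2$, so the total is $1 + m(r-1) + m(r-1)^2 = 1 + m(r-1)r = 1 - mr + mr^2$, in agreement with the first computation.

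There is no genuine obstacle here, as the result is a direct specialization of the machinery already in place; the only thing to watch is the index shift between $f_{i-1}$ and the summation index $i$ in (3.1), together with the alternating sign pattern, so that the vertex count and the edge count land in the correct coefficients $E_1$ and $E_2$. The two routes above guard against exactly that slip.
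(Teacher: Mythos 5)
Your proposal is correct and follows exactly the paper's route: read off the face vector $(1,m,m)$ of the cycle, compute the $e$-vector $(1,-m,m)$ via (3.1), and apply Theorem 2/Corollary 1 to get $1-mr+mr^2$. The extra verification via $\sum_{F\in\Gamma}(r-1)^{\dim(F)+1}$ is a nice cross-check but not a different argument.
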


Note that Corollary 1 provides a quick way of computing the rank of $A_\Gamma$ in many other examples, provided each random variable has the same outcomes. For example, let $\Gamma=[12][14][23]=\{\emptyset, \{1\},\{2\},\{3\},\{4\},\{1,2\},\{1,4\},\{2,3\}\}$. Then the face vector of $\Gamma$ is $f=(1,4,3)$, if $r_1=r_2=r_3=r_4=2$, then
\begin{equation}
\text{rank}(A_\Gamma)=\displaystyle\sum_{k=0}^2\sum_{i=k}^2(-1)^{i-k}f_{i-1}{i\choose k}r^k.
\end{equation}
In the notation of \S3, $E_0=0$, $E_1=-2$, and $E_2=3$. Then (4.3) becomes
\begin{center}
$\text{rank}(A_\Gamma)=0-2r+3r^2=8$.
\end{center}
For another quick example, let $\Gamma$ be the boundary of the tetrahedron, which has face vector $f=(1,4,6,4)$. In this case, we have $E_0=-1$, $E_1=4$, $E_2=-6$, and $E_3=4$. This gives
\begin{center}
$\text{rank}(A_\Gamma)=-1+4r-6r^2+4r^3$.
\end{center}
If all of the variables are binomial, then we have $\text{rank}(A_\Gamma)=15$. Note that the boundary of the tetrahedron corresponds to the model in which only the full interaction factor between all of the variables is not considered.

In the second example above, notice that the values of $E_i$, when put in order, are the values of the face vector, but alternating. This is not true of the first example. In \cite{JM}, this relationship is explored, and it is shown that this is the case if and only if the simplicial complex satisfies the Dehn-Sommerville equations (for an overview of the Dehn-Sommerville equations, see \S3.16 of \cite{RS}). This motivates the following definition.

\begin{defin}
A \textbf{DS-hierarchical log-linear model} is a hierarchical model on a simplicial complex satisfying the Dehn-Sommerville equations.
\end{defin}

In the case of DS-hierarchical log-linear models, the computation of the rank is even simpler.

\begin{cor}
If $\mathcal{M}_\Gamma$ is a DS-hierarchical log-linear model, and $r_1=r_2=\dots=r_m=r$, then we have
\begin{center}
$\text{rank}(A_\Gamma)=\displaystyle\sum_{i=0}^d(-1)^{d-i}f_{i-1}r^i$.
\end{center}
\end{cor}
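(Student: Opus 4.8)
The plan is to reduce the statement to the formula already established in Corollary 1 and then exploit the extra symmetry that the Dehn-Sommerville equations impose on the face vector. By Theorem 2 together with the expansion (3.1), the rank is the polynomial $\sum_{k=0}^d E_k r^k$ with $E_k = \sum_{i=k}^d (-1)^{i-k}\binom{i}{k} f_{i-1}$, so it suffices to show that, for a complex satisfying the Dehn-Sommerville equations, each $e$-vector entry collapses to a single signed face number, namely $E_k = (-1)^{d-k} f_{k-1}$. Substituting this into the formula of Corollary 1 then yields exactly $\sum_{i=0}^d (-1)^{d-i} f_{i-1} r^i$.

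The key algebraic observation is to factor the alternating sign in (3.1). Since $(-1)^{i-k} = (-1)^{d-k}(-1)^{d-i}$, one can rewrite $E_k = (-1)^{d-k}\sum_{i=k}^d (-1)^{d-i}\binom{i}{k} f_{i-1}$. First I would recognize the remaining inner sum as a standard Dehn-Sommerville expression: the Dehn-Sommerville equations, in the form recorded in \S3.16 of \cite{RS}, are precisely the identities $f_{k-1} = \sum_{i=k}^d (-1)^{d-i}\binom{i}{k} f_{i-1}$ for $0 \le k \le d$. Granting these, the inner sum equals $f_{k-1}$ and hence $E_k = (-1)^{d-k} f_{k-1}$, which is the desired collapse; substituting termwise into $\sum_k E_k r^k$ finishes the argument.

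Alternatively, and more in keeping with the narrative preceding the statement, one may simply invoke the characterization proved in \cite{JM}: a simplicial complex satisfies the Dehn-Sommerville equations if and only if its $e$-vector is the alternating face vector, that is $E_k = (-1)^{d-k} f_{k-1}$. With that result in hand the corollary is immediate from Theorem 2.

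I expect the only real obstacle to be bookkeeping: matching the particular indexing and sign conventions of the Dehn-Sommerville equations, which are most commonly phrased through the symmetry $h_k = h_{d-k}$ of the $h$-vector, to the exponential-Hilbert-series normalization used here. Concretely, one must verify that the transformation (3.1) carries the $h$-vector symmetry into the statement $E_k = (-1)^{d-k} f_{k-1}$; this translation is exactly the content established in \cite{JM}, so in the present note it can be cited rather than re-derived.
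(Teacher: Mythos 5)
Your proposal is correct, and its second route---invoking the equivalence from \cite{JM} between the Dehn-Sommerville equations and the identity $E_k=(-1)^{d-k}f_{k-1}$, then substituting into Theorem 2---is exactly the paper's one-line proof. Your first route, which re-derives that equivalence from (3.1) by factoring $(-1)^{i-k}=(-1)^{d-k}(-1)^{d-i}$ and recognizing Klee's form of the Dehn-Sommerville relations $f_{k-1}=\sum_{i=k}^d(-1)^{d-i}\binom{i}{k}f_{i-1}$, is a valid self-contained expansion of that citation.
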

\begin{proof}
By Theorem 3 in \cite{JM}, the Dehn-Sommerville equations are equivalent to the statement that $E_i=(-1)^{d-i}f_{i-1}$ for all $i=0,\dots,d$.
\end{proof}

If $\mathcal{M}_\Gamma$ is a DS-hierarchical log-linear model, and all random variables in $X$ have the same number of outcomes, then $\text{rank}(A_\Gamma)$ (and thus the dimension of $\mathcal{M}_\Gamma$) is particularly simple to compute. In this case, it is the structure of the simplicial complex that is key to determining the rank.

Returning to the cyclic model, note that $f=(1,m,m)$, and $E=(1,-m,m)$. Since the dimension $\Gamma$ is one, and thus $d=2$, we have $E_0=(-1)^{2-0}1$, $E_1=(-1)^{2-1}m$, and $E_2=(-1)^{2-2}m$. Thus, the cyclic model is another example of a DS-hierarchical model.

\section{Further examples: The main effect and saturated models}

We compute the rank of $A_\Gamma$ using the exponential Hilbert series for the main effect model and the saturated model. While formulas for the dimension and degrees of freedom for these models are not new, these computations showcase how the new formulas in \S4 can be used. Note that neither of these models is a DS-hierarchical model.

\begin{example}[The Main Effect Model] Let $\Gamma$ be the main effect model such that $\Gamma=[1][2]\dots[m]$. Then we have the following formulas:
\begin{equation}
E(\Gamma;\textbf{x})= 1 + (e^{x_1}-1) + \dots + (e^{x_m}-1)
\end{equation}
\begin{equation}
E(\Gamma;t)= 1+m(e^t-1)
\end{equation}
If $X=(X_1,\dots,X_m)$, and all $X_i$ have the same number of outcomes, $r$, then by Theorem 2 and (5.2), we have
\begin{center}
$\text{rank}(A_\Gamma)=1-m+mr$.
\end{center}
If we allow the ranks to differ, then we use Theorem 1 and (5.1) to get
\begin{center}
$\text{rank}(A_\Gamma)=1-m+\displaystyle\sum_{i=1}^m r_i$.
\end{center}
In the case where all $X_i$ have $r$ outcomes, the dimension of the model is $m(r-1)$. The probability simplex $\Delta_{r^m}$ has dimension $r^m-1$. Thus, there are $r^m-1-m(r-1)$ degrees of freedom.
\end{example}

\begin{example}[The Saturated Model] Let $\Gamma$ be the saturated model corresponding to $2^{[m]}$. Consider the case where each variable has $r$ outcomes. Then, as $\Gamma=2^{[m]}$, the number of $i$-faces of $\Gamma$ is simply $\displaystyle{m \choose i}$. Then,
\begin{center}
$E_k=\displaystyle\sum_{i=k}^m(-1)^{i-k}{m\choose i}{i\choose k}=\sum_{i=k}^m(-1)^{i-k}{m\choose k}{m-k\choose i-k}$.
\end{center}
From this formula, we immediately get that $E_m=1$. For a fixed $k\neq m$, we have
\begin{center}
$E_k=\displaystyle{m\choose k}\sum_{i=k}^m(-1)^{i-k}{m-k\choose i-k}$.
\end{center}
After making the substitution $l=m-k$ in the above sum, we get
\begin{center}
$E_k=\displaystyle{m\choose k}\sum_{i=m-l}^m(-1)^{i-(m-l)}{l-k\choose i-(m-l)}$.
\end{center}
The sum above is equal to $\displaystyle\sum_{j=0}^l(-1)^j{l\choose j}=0$. Thus, if $k\neq m$, we have $E_k=0$. Then the rank of $A_\Gamma$ is
\begin{center}
$\text{rank}(A_\Gamma)=r^m$.
\end{center}
Thus, the dimension of the saturated model is $r^m-1$, which is the same as the probability simplex $\Delta_{r^m}$. The exponential Hilbert series then recovers the fact that the saturated model has zero degrees of freedom.
\end{example}

As a final example, let $\Gamma$ be the boundary of the saturated model. Therefore $\Gamma$ is the model that includes all interaction factors except for the interaction term between all variables. $\Gamma$ has the same number of $i$-faces as the saturated model, except $\Gamma$ has no $m-1$ face. Since $\Gamma$ is the boundary of a simplicial polytope, it is a DS-hierarchical model (see \cite{RS2}). Therefore, we have 
\begin{center}
$E_k=(-1)^{k+1}f_{k-1}$, 
\end{center}
for $k=0,\dots,m-1$, which yields
\begin{center}
$\text{rank}(A_\Gamma)=\displaystyle\sum_{i=0}^{m-1}(-1)^{i+1}{m\choose i}r^i$,
\end{center}
assuming each random vector has $r$ outcomes.

\section{Open questions}

We end with some open questions regarding Hilbert series and log-linear hierarchical models. We have seen before that computing the rank of $A_\Gamma$ for a DS-hierarchical model is relatively simple, assuming that each random vector has the same number of outcomes. However, the definition of a DS-hierarchical model is combinatorial and not statistical.

\begin{prob}
Find a statistical explanation for the DS-hierarchical property.
\end{prob}

Throughout the paper, we also only considered the rank of $A_\Gamma$ for a DS-hierarchical model when we considered random variables with the same number of outcomes. This leads us to the following question.

\begin{prob}
Is it possible to find a simple description of $\text{rank}(A_\Gamma)$ for a DS-hierarchical model if we allow the number of outcomes to vary?
\end{prob}

Finally, many hierarchical models are graphical models, where the complex $\Gamma$ is the clique complex of a certain graph (see Chapter 13 of \cite{Sull}) for details. The classical Hilbert series of the clique complex was computed in \cite{FF} in terms of the subgraph polynomial.

\begin{prob}
Find a way to relate the Hilbert series of the clique complex and/or the subgraph polynomial to the statistics of hierarchical models.
\end{prob}

There has been very little research done at this point into how Hilbert series (both classical and exponential) of simplicial complexes relate to hierarchical models. Given the interplay between properties of the simplicial complex and properties of the model through the exponential Hilbert series, this line of questioning seems promising.

\begin{bibdiv}
\begin{biblist}

\bib{FF}{article}{
   author={Ferrarello, Daniela},
   author={Fr\"{o}berg, Ralf},
   title={The Hilbert series of the clique complex},
   journal={Graphs Combin.},
   volume={21},
   date={2005},
   number={4},
   pages={401--405},
}

\bib{HEL}{book}{
   author={H\o jsgaard, S\o ren},
   author={Edwards, David},
   author={Lauritzen, Steffen},
   title={Graphical Models with R},
   series={Use R!},
   publisher={Springer Science+Business Media, New York},
   date={2012},
}

\bib{HS}{article}{
   author={Ho\c{s}ten, Serkan},
   author={Sullivant, Seth},
   title={Gr\"{o}bner bases and polyhedral geometry of reducible and cyclic
   models},
   journal={J. Combin. Theory Ser. A},
   volume={100},
   date={2002},
   number={2},
   pages={277--301},
}

\bib{JM}{arXiv}{
   author={Johnson, Wayne A.},
   author={Mogilski, Wiktor J.},
   title={The $e$-vector of a simplicial complex},
   date={2018},
   eprint={1806.05239},
   archiveprefix={arXiv},
   primaryclass={math.CO},
}

\bib{MS}{book}{
   author={Miller, Ezra},
   author={Sturmfels, Bernd},
   title={Combinatorial commutative algebra},
   series={Graduate Texts in Mathematics},
   volume={227},
   publisher={Springer-Verlag, New York},
   date={2005},
}

\bib{NS}{article}{
   author={Novik, Isabella},
   author={Swartz, Ed},
   title={Applications of Klee's Dehn-Sommerville Relations},
   journal={Discrete and Computational Geometry},
   volume={42},
   date={2009},
   number={2},
   pages={261--276},
}

\bib{RS}{book}{
   author={Stanley, Richard P.},
   title={Enumerative Combinatorics, Volume 1},
   edition={Second Edition},
   series={Cambridge Studies in Advanced Mathematics},
   volume={49},
   publisher={Cambridge University Press, New York},
   date={2012},
}

\bib{RS2}{book}{
   author={Stanley, Richard P.},
   title={Combinatorics and Commutative Algebra},
   edition={Second Edition},
   series={Progress in Mathematics},
   volume={41},
   publisher={Birkhauser, Boston},
   date={1996},
}

\bib{Sull}{book}{
   author={Sullivant, Seth},
   title={Algebraic Statistics},
   series={Graduate Studies in Mathematics},
   volume={194},
   publisher={American Mathematical Society, Rhode Island},
   date={2018},
}

\end{biblist}
\end{bibdiv}

\end{document}